\crefname{algocf}{Algorithm}{Algorithms}
\Crefname{algocf}{Algorithm}{Algorithms}
\numberwithin{theorem}{section}
\numberwithin{equation}{section}
\numberwithin{figure}{section}
\numberwithin{table}{section}
\numberwithin{algorithm}{section}
\title{A Low-Rank Multigrid Method for the Stochastic Steady-State Diffusion Problem%
   \thanks{This work was supported by the U.S. Department of Energy Office of Advanced Scientific Computing Research, Applied Mathematics program under award DE-SC0009301 and by the U.S. National Science Foundation under grant DMS1418754.}}
\author{
   Howard C. Elman%
   \thanks{Department of Computer Science and Institute for Advanced Computer Studies, University of Maryland, College Park, MD 20742 (\email{elman@cs.umd.edu}).}
   \and
   Tengfei Su%
   \thanks{Applied Mathematics \& Statistics, and Scientific Computation Program, University of Maryland, College Park, MD 20742 (\email{tengfesu@math.umd.edu}).}
}
\begin{document}

\maketitle

\begin{abstract}
We study a multigrid method for solving large linear systems of equations with tensor product structure. Such systems are obtained from stochastic finite element discretization of stochastic partial differential equations such as the steady-state diffusion problem with random coefficients. When the variance in the problem is not too large, the solution can be well approximated by a low-rank object. In the proposed multigrid algorithm, the matrix iterates are truncated to low rank to reduce memory requirements and computational effort. The method is proved convergent with an analytic error bound. Numerical experiments show its effectiveness in solving the Galerkin systems compared to the original multigrid solver, especially when the number of degrees of freedom associated with the spatial discretization is large.
\end{abstract}

\begin{keywords}
   stochastic finite element method, multigrid, low-rank approximation
\end{keywords}

\begin{AMS}
   35R60, 60H15, 60H35, 65F10, 65N30, 65N55
\end{AMS}

\section{Introduction}
Stochastic partial differential equations (SPDEs) arise from physical applications where the parameters of the problem are subject to uncertainty. Discretization of SPDEs gives rise to large linear systems of equations which are computationally expensive to solve. These systems are in general sparse and structured. In particular, the coefficient matrix can often be expressed as a sum of tensor products of smaller matrices \cite{OlUl10,MaZa12,PoEl09}. For such systems it is natural to use an iterative solver where the coefficient matrix is never explicitly formed and matrix-vector products are computed efficiently. One way to further reduce costs is to construct low-rank approximations to the desired solution. The iterates are truncated so that the solution method handles only low-rank objects in each iteration. This idea has been used to reduce the costs of iterative solution algorithms based on Krylov subspaces. For example, a low-rank conjugate gradient method was given in \cite{KrTo11}, and low-rank generalized minimal residual methods have been studied in \cite{BaGr13,LeEl16}. 

In this study, we propose a low-rank multigrid method for solving the Galerkin systems. We consider a steady-state diffusion equation with random diffusion coefficient as model problem, and we use the stochastic finite element method (SFEM, see \cite{BaTe04,GhSp03}) for the discretization of the problem.  The resulting Galerkin system has tensor product structure and moreover, quantities used in the computation, such as the solution sought, can be expressed in matrix format. It has been shown that such systems admit low-rank approximate solutions \cite{BeOn15,KrTo11}. In our proposed multigrid solver, the matrix iterates are truncated to have low rank in each iteration. We derive an analytic bound for the error of the solution and show the convergence of the algorithm. We demonstrate using benchmark problems that the low-rank multigrid solver is often more efficient than a solver that does not use truncation, and that it is especially advantageous in reducing computing time for large-scale problems.

An outline of the paper is as follows. In \cref{sec:model} we state the problem and briefly review the stochastic finite element method and the multigrid solver for the stochastic Galerkin system from which the new technique is derived. In \cref{sec:low-rank} we discuss the idea of low-rank approximation and introduce the multigrid solver with low-rank truncation. A convergence analysis of the low-rank multigrid solver is also given in this section. The results of numerical experiments are shown in \cref{sec:numerical} to test the performance of the algorithm, and some conclusions are drawn in the last section.

\section{Model problem}
\label{sec:model}
Consider the stochastic steady-state diffusion equation with homogeneous Dirichlet boundary conditions
\begin{equation} \label{eq:stoch_diff}
\begin{cases}
-\nabla\cdot(c(x,\omega)\nabla u(x,\omega)) = f(x) & \text{in } D\times\Omega,\\
u(x,\omega) = 0 & \text{on } \partial D\times\Omega.\\
\end{cases}
\end{equation}
Here $D$ is a spatial domain and $\Omega$ is a sample space with $\sigma$-algebra $\mathscr{F}$ and probability measure $P$. The diffusion coefficient $c(x,\omega): D\times\Omega\rightarrow\mathbb{R}$ is a random field. We consider the case where the source term $f$ is deterministic. The stochastic Galerkin formulation of \cref{eq:stoch_diff} uses a weak formulation: find $u(x,\omega)\in\mathbb{V}=H_0^1(D)\otimes L^2(\Omega)$ satisfying
\begin{equation} \label{eq:weak1}
\int_\Omega\int_D c(x,\omega)\nabla u(x,\omega)\cdot\nabla v(x,\omega)\text{d}x\text{d}P = \int_\Omega\int_D f(x)v(x,\omega)\text{d}x\text{d}P
\end{equation}
for all $v(x,\omega)\in\mathbb{V}$. The problem is well posed if $c(x,\omega)$ is bounded and strictly positive, i.e.,
\begin{displaymath}
0<c_1\leq c(x,\omega)\leq c_2<\infty,\,\text{a.e. } \forall x\in D,
\end{displaymath}
so that the Lax-Milgram lemma establishes existence and uniqueness of the weak solution. 

We will assume that the stochastic coefficient $c(x,\omega)$ is represented as a truncated Karhunen-Lo$\grave{\text{e}}$ve (KL) expansion \cite{Loeve,Lord}, in terms of a finite collection of uncorrelated random variables $\{\xi_l\}_{l=1}^m$:
\begin{equation} \label{eq:kl}
c(x,\omega) \approx c_0(x) + \sum_{l=1}^m \sqrt{\lambda_l}c_l(x)\xi_l(\omega)
\end{equation}
where $c_0(x)$ is the mean function, $(\lambda_l,c_l(x))$ is the $l$th eigenpair of the covariance function $r(x,y)$, and the eigenvalues $\{\lambda_l\}$ are assumed to be in non-increasing order. In \cref{sec:numerical} we will further assume these random variables are independent and identically distributed. Let $\rho(\xi)$ be the joint density function and $\Gamma$ be the joint image of $\{\xi_l\}_{l=1}^m$. The weak form of \cref{eq:stoch_diff} is then given as follows: find $u(x,\xi)\in\mathbb{W}=H_0^1(D)\otimes L^2(\Gamma)$ s.t.
\begin{equation} \label{eq:weak2}
\int_\Gamma \rho(\xi)\int_D c(x,\xi)\nabla u(x,\xi)\cdot\nabla v(x,\xi)\text{d}x\text{d}\xi = \int_\Gamma \rho(\xi)\int_D f(x)v(x,\xi)\text{d}x\text{d}\xi
\end{equation}
for all $v(x,\xi)\in\mathbb{W}$.

\subsection{Stochastic finite element method}
We briefly review the stochastic finite element method as described in \cite{BaTe04,GhSp03}. This method approximates the weak solution of \cref{eq:stoch_diff} in a finite-dimensional subspace
\begin{equation}
\mathbb{W}^{hp} = S^h\otimes T^p=\text{span}\{\phi(x)\psi(\xi) \mid \phi(x)\in S^h,\psi(\xi)\in T^p\},
\end{equation}
where $S^h$ and $T^p$ are finite-dimensional subspaces of $H_0^1(D)$ and $L^2(\Gamma)$. We will use quadrilateral elements and piecewise bilinear basis functions $\{\phi(x)\}$ for the discretization of the physical space $H_0^1(D)$, and generalized polynomial chaos \cite{XiKa03} for the stochastic basis functions $\{\psi(\xi)\}$. The latter are $m$-dimensional orthogonal polynomials whose total degree doesn't exceed $p$. The orthogonality relation means
\[
\int_\Gamma\psi_r(\xi)\psi_s(\xi)\rho(\xi)\text{d}\xi=\delta_{rs}\int_\Gamma \psi_r^2(\xi)\rho(\xi)\text{d}\xi.
\]
For instance, Legendre polynomials are used if the random variables have uniform distribution with zero mean and unit variance. The number of degrees of freedom in $T^p$ is
\[
N_{\xi}=\frac{(m+p)!}{m!p!}.
\]

Given the subspace, now one can write the SFEM solution as a linear combination of the basis functions,
\begin{equation} \label{eq:sfem_soln}
u_{hp}(x,\xi)=\sum_{j=1}^{N_x}\sum_{s=1}^{N_{\xi}} u_{js}\phi_j(x)\psi_s(\xi),
\end{equation}
where $N_x$ is the dimension of the subspace $S^h$. Substituting \cref{eq:kl,eq:sfem_soln} into \cref{eq:weak2}, and taking the test function as any basis function $\phi_i(x)\psi_r(\xi)$ results in the Galerkin system: find $\mathbf{u}\in\mathbb{R}^{N_xN_{\xi}}$, s.t.
\begin{equation} \label{eq:galerkin}
A\mathbf{u}=\mathbf{f}.
\end{equation}
The coefficient matrix $A$ can be represented in tensor product notation \cite{PoEl09},
\begin{equation}
A=G_0\otimes K_0+\sum_{l=1}^m G_l\otimes K_l,
\end{equation}
where $\{K_l\}_{l=0}^m$ are the stiffness matrices and $\{G_l\}_{l=0}^m$ correspond to the stochastic part, with entries
\begin{equation}
\begin{aligned}
G_0(r,s)&=\int_\Gamma\psi_r(\xi)\psi_s(\xi)\rho(\xi)\text{d}\xi,\,K_0(i,j)= \int_D c_0(x)\nabla\phi_i(x)\nabla\phi_j(x)\text{d}x,\\
G_l(r,s)&=\int_\Gamma\xi_l\psi_r(\xi)\psi_s(\xi)\rho(\xi)\text{d}\xi,\,K_l(i,j)= \int_D \sqrt{\lambda_l}c_l(x)\nabla\phi_i(x)\nabla\phi_j(x)\text{d}x,
\end{aligned}
\end{equation}
$l=1,\dots,m;\,r,s=1,\ldots,N_\xi;\,i,j=1,\ldots,N_x$. The right-hand side can be written as a tensor product of two vectors:
\begin{equation}
\mathbf{f}=g_0\otimes f_0,
\end{equation}
where
\begin{equation}
\begin{aligned}
g_0(r)&=\int_\Gamma\psi_r(\xi)\rho(\xi)\text{d}\xi,\,\, r=1,\ldots,N_\xi,\\
f_0(i)&=\int_D f(x)\phi_i(x)\text{d}x,\,\, i=1,\ldots,N_x.
\end{aligned}
\end{equation}

Note that in the Galerkin system \cref{eq:galerkin}, the matrix $A$ is symmetric and positive definite. It is also blockwise sparse (see \cref{fig:sparsity}) due to the orthogonality of $\{\psi_r(\xi)\}$. The size of the linear system is in general very large ($N_xN_\xi\times N_xN_\xi$). For such a system it is suitable to use an iterative solver. Multigrid methods are among the most effective iterative solvers for the solution of discretized elliptic PDEs, capable of achieving convergence rates that are independent of the mesh size, with computational work growing only linearly with the problem size \cite{Hack85,Saad03}. 
\begin{figure}[tbhp]
\includegraphics[width=0.32\textwidth]{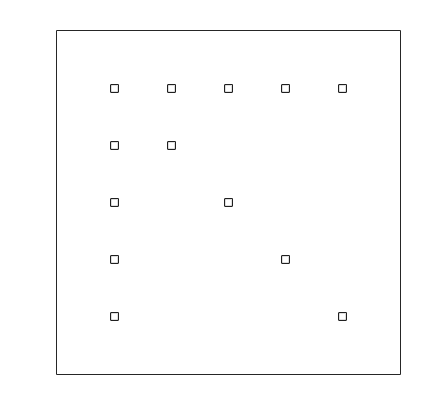}
\includegraphics[width=0.32\textwidth]{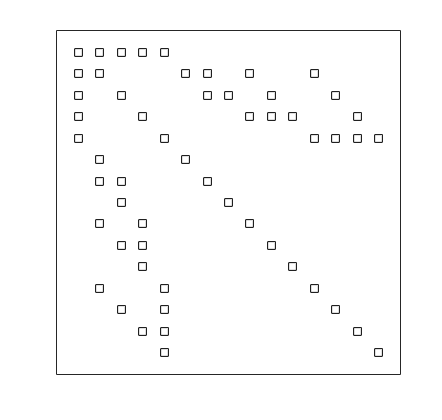}
\includegraphics[width=0.32\textwidth]{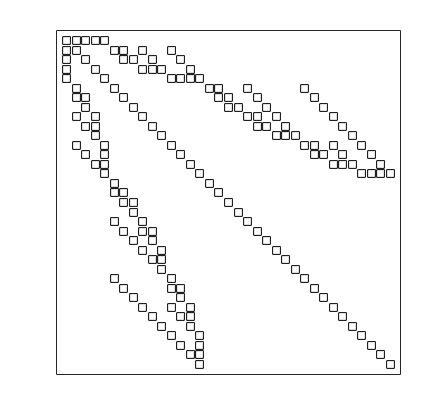}
\caption{Block structure of $A$. $m=4,p=1,2,3$ from left to right. Block size is $N_x\times N_x$.}
\label{fig:sparsity}
\end{figure}

\subsection{Multigrid}
\label{sec:multigrid}
In this subsection we discuss a geometric multigrid solver proposed in \cite{ElFu07} for the solution of the stochastic Galerkin system \cref{eq:galerkin}. For this method, the mesh size $h$ varies for different grid levels, while the polynomial degree $p$ is held constant, i.e., the fine grid space and coarse grid space are defined as
\begin{equation}
\mathbb{W}^{hp}=S^h\otimes T^p,\quad \mathbb{W}^{2h,p}=S^{2h}\otimes T^p,
\end{equation}
respectively. Then the prolongation and restriction operators are of the form
\begin{equation} \label{eq:grid_transfer}
\mathscr{P}=I\otimes P,\quad\mathscr{R}=I\otimes P^T,
\end{equation}
where $P$ is the same prolongation matrix as in the deterministic case. On the coarse grid we only need to construct matrices $\{K^{2h}_l\}_{l=0}^m$, and 
\begin{equation}
{A}^{2h}=G_0\otimes {K}^{2h}_0+\sum_{l=1}^m G_l\otimes {K}^{2h}_l.
\end{equation}
The matrices $\{G_l\}_{l=0}^m$ are the same for all grid levels.

\Cref{alg:mg} describes the complete multigrid method. In each iteration, we apply one multigrid cycle (\textsc{Vcycle}) for the residual equation
\begin{equation}
A\mathbf{c}^{(i)}=\mathbf{r}^{(i)}=\mathbf{f}-A\mathbf{u}^{(i)}
\end{equation}
and update the solution $\mathbf{u}^{(i)}$ and residual $\mathbf{r}^{(i)}$. The \textsc{Vcycle} function is called recursively. On the coarsest grid level ($h=h_0$) we form matrix $A$ and solve the linear system directly. The system is of order $O(N_\xi)$ since $A\in\mathbb{R}^{N_xN_\xi\times N_xN_\xi}$ where $N_x$ is a very small number on the coarsest grid. The smoothing function (\textsc{Smooth}) is based on a matrix splitting $A=Q-Z$ and stationary iteration
\begin{equation} \label{eq:smooth}
\mathbf{u}_{s+1}  = \mathbf{u}_s+Q^{-1}(\mathbf{f}-A\mathbf{u}_s),
\end{equation}
which we assume is convergent, i.e., the spectral radius $\rho(I-Q^{-1}A)<1.$ The algorithm is run until the specified relative tolerance $tol$ or maximum number of iterations $maxit$ is reached. It is shown in \cite{ElFu07} that for $f\in L^2(D)$, the convergence rate of this algorithm is independent of the mesh size $h$, the number of random variables $m$, and the polynomial degree $p$.

\begin{algorithm2e}
\caption{Multigrid for stochastic Galerkin systems}
\label{alg:mg}
\SetFuncSty{textsc}
\SetNlSty{}{}{:}
\DontPrintSemicolon
\SetKw{Init}{initialization}
\SetKwProg{Func}{function}{}{end}
\SetKwFunction{Vcycle}{Vcycle}
\SetKwFunction{Smooth}{Smooth}
\Init: $i=0$, $\mathbf{r}^{(0)}=\mathbf{f}$, $r_0=\lVert\mathbf{f}\rVert_2$ \;
\While{$r>tol*r_0$ $\&$ $i\leq maxit$}{
	$\mathbf{c}^{(i)}=$ \Vcycle{$A,\mathbf{0},\mathbf{r}^{(i)}$} \;
	$\mathbf{u}^{(i+1)}=\mathbf{u}^{(i)}+\mathbf{c}^{(i)}$ \;
	$\mathbf{r}^{(i+1)} = \mathbf{f}-A\mathbf{u}^{(i+1)}$ \;
	$r=\lVert \mathbf{r}^{(i+1)}\rVert_2$, $i=i+1$ \;
}
\BlankLine
\Func{$\mathbf{u}^h=$ \Vcycle{$A^h,\mathbf{u}^h_0,\mathbf{f}^h$}}{
	\uIf{$h==h_0$}{
		solve $A^h\mathbf{u}^{h}=\mathbf{f}^h$ directly \;
	}
	\Else{
		$\mathbf{u}^h=$ \Smooth{$A^h,\mathbf{u}^h_0,\mathbf{f}^h$} \;
		$\mathbf{r}^h=\mathbf{f}^h-A^h\mathbf{u}^h$ \;
		$\mathbf{r}^{2h}=\mathscr{R}\mathbf{r}^h$ \;
		$\mathbf{c}^{2h}=$ \Vcycle{$A^{2h},\mathbf{0},\mathbf{r}^{2h}$} \;
		$\mathbf{u}^{h}=\mathbf{u}^{h}+\mathscr{P}{\mathbf{c}^{2h}}$ \;
		$\mathbf{u}^h=$ \Smooth{$A^h,\mathbf{u}^h,\mathbf{f}^h$} \;
	}
}
\BlankLine
\Func{$\mathbf{u}=$ \Smooth{$A,\mathbf{u},\mathbf{f}$}}{
	\For{$\nu$ steps}{
		$\mathbf{u} = \mathbf{u}+Q^{-1}(\mathbf{f}-A\mathbf{u})$ \;
	}
}
\end{algorithm2e}

\section{Low-rank approximation}
\label{sec:low-rank}
In this section we consider a technique designed to reduce computational effort, in terms of both time and memory use, using low-rank methods. We begin with the observation that the solution vector of the Galerkin system \cref{eq:galerkin}
\begin{displaymath}
\mathbf{u}=[u_{11},u_{21},\ldots,u_{N_x1},\ldots,u_{1N_\xi},u_{2N_\xi},\ldots,u_{N_xN_\xi}]^T \in\mathbb{R}^{N_xN_\xi}
\end{displaymath}
can be restructured as a matrix
\begin{equation}
U = \text{mat}(\mathbf{u})=
  \begin{pmatrix}
  u_{11} & u_{12} & \cdots & u_{1N_\xi} \\
  u_{21} & u_{22} & \cdots & u_{2N_\xi} \\
  \vdots  & \vdots  & \ddots & \vdots  \\
  u_{N_x1} & u_{N_x2} & \cdots & u_{N_xN_\xi} 
  \end{pmatrix}\in\mathbb{R}^{N_x\times N_\xi}.
\end{equation} 
Then (\ref{eq:galerkin}) is equivalent to a system in matrix format,
\begin{equation} \label{eq:matrix}
\mathcal{A}(U)=F,
\end{equation}
where
\begin{equation}
\begin{aligned}
&\mathcal{A}(U)=K_0UG_0^T+\sum_{l=1}^m K_lUG_l^T,\\
&F=\text{mat}(\mathbf{f})=\text{mat}(g_0\otimes f_0)=f_0 g_0^T.
\end{aligned}
\end{equation}
It has been shown in \cite{BeOn15,KrTo11} that the ``matricized'' version of the solution $U$ can be well approximated by a low-rank matrix when $N_xN_\xi$ is large. Evidence of this can be seen in \cref{fig:decay1}, which shows the singular values of the exact solution $U$ for the benchmark problem discussed in \cref{sec:numerical}. In particular, the singular values decay exponentially, and low-rank approximate solutions can be obtained by dropping terms from the singular value decomposition corresponding to small singular values.
\begin{figure}[tbhp]
    \includegraphics[width=0.4\textwidth]{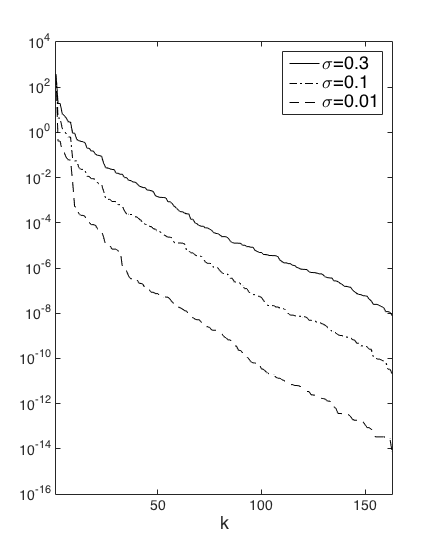}
    \includegraphics[width=0.4\textwidth]{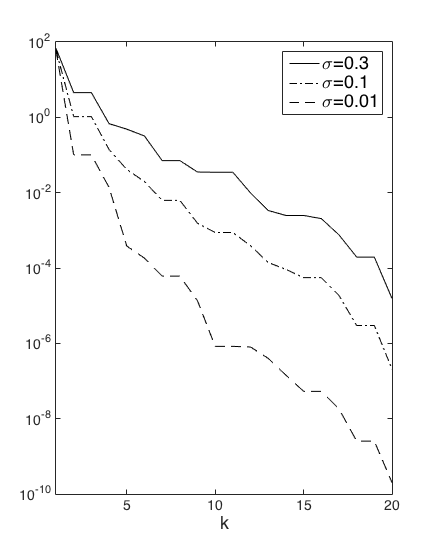}
    \centering
\caption{Decay of singular values of solution matrix $U$. Left: exponential covariance, $b=5$, $h=2^{-6}$, $m=8$, $p=3$. Right: squared exponential covariance, $b=2$, $h=2^{-6}$, $m=3$, $p=3$. See the benchmark problem in \cref{sec:numerical}.}
\label{fig:decay1}
\end{figure}

Now we use low-rank approximation in the multigrid solver for \cref{eq:matrix}. Let $U^{(i)}=\text{mat}(\mathbf{u}^{(i)})$ be the $i$th iterate, expressed in matricized format\footnote{In the sequel, we use $\mathbf{u}^{(i)}$ and $U^{(i)}$ interchangeably to represent the equivalent vectorized or matricized quantities.}, and suppose $U^{(i)}$ is represented as the outer product of two rank-$k$ matrices, i.e., $U^{(i)}\approx V^{(i)}W^{(i)T}$, where $V^{(i)}\in\mathbb{R}^{N_x\times k}$, $W^{(i)}\in\mathbb{R}^{N_\xi\times k}$. This factored form is convenient for implementation and can be readily used in basic matrix operations. For instance, the sum of two matrices gives
\begin{equation}
V_1^{(i)}W_1^{(i)T}+V_2^{(i)}W_2^{(i)T}=[V_1^{(i)},V_2^{(i)}][W_1^{(i)},W_2^{(i)}]^T.
\end{equation}
Similarly, $\mathcal{A}(V^{(i)}W^{(i)T})$ can also be written as an outer product of two matrices:
\begin{equation} \label{eq:opt_a}
\begin{aligned}
\mathcal{A}(V^{(i)}W^{(i)T}) &=(K_0V^{(i)})(G_0W^{(i)})^T+\sum_{l=1}^m (K_lV^{(i)})(G_lW^{(i)})^T\\
=& \,\,[K_0V^{(i)},K_1V^{(i)},\ldots,K_mV^{(i)}][G_0W^{(i)},G_1W^{(i)},\ldots,G_mW^{(i)}]^T.\\
\end{aligned}
\end{equation}
If $V^{(i)},W^{(i)}$ are used to represent iterates in the multigrid solver and $k\ll \text{min}(N_x,N_\xi)$, then both memory and computational (matrix-vector products) costs can be reduced, from $O(N_xN_\xi)$ to $O((N_x+N_\xi)k)$. Note, however, that the ranks of the iterates may grow due to matrix additions. For example, in \cref{eq:opt_a} the rank may increase from $k$ to $(m+1)k$ in the worst case. A way to prevent this from happening, and also to keep costs low, is to truncate the iterates and force their ranks to remain low.

\subsection{Low-rank truncation}
Our truncation strategy is derived using an idea from \cite{KrTo11}. Assume ${\tilde X}=\tilde{V}\tilde{W}^T$, $\tilde{V}\in\mathbb{R}^{N_x\times \tilde{k}}$, $\tilde{W}\in\mathbb{R}^{N_\xi\times \tilde{k}}$, and ${X}=\mathcal{T}(\tilde X)$ is truncated to rank $k$ with $X=VW^T$, $V\in\mathbb{R}^{N_x\times k}$, $W\in\mathbb{R}^{N_\xi\times k}$ and $k<\tilde k$. First, compute the QR factorization for both $\tilde V$ and $\tilde W$,
\begin{equation}
\tilde V=Q_{\tilde V}R_{\tilde V}, \quad \tilde W=Q_{\tilde W}R_{\tilde W},\quad \text{ so } {\tilde X}=Q_{\tilde V}R_{\tilde V}R_{\tilde W}^TQ_{\tilde W}^T.
\end{equation}
The matrices $R_{\tilde V}$ and $R_{\tilde W}$ are of size ${\tilde k}\times {\tilde k}$. Next, compute a singular value decomposition (SVD) of the small matrix $R_{\tilde V}R_{\tilde W}^T$:
\begin{equation}
R_{\tilde V}R_{\tilde W}^T=\hat V \text{diag}(\sigma_1,\ldots,\sigma_{\tilde k})\hat W^T
\end{equation}
where $\sigma_1,\ldots,\sigma_{\tilde k}$ are the singular values in descending order. We can truncate to a rank-$k$ matrix where $k$ is specified using either a relative criterion for singular values,
\begin{equation}
\label{eq:rel_tol}
\sqrt{\sigma_{k+1}^2+\cdots+\sigma_{{\tilde k}}^2}\leq \epsilon_{\text{rel}}\sqrt{\sigma_{1}^2+\cdots+\sigma_{{\tilde k}}^2}
\end{equation}
or an absolute one,
\begin{equation} \label{eq:abs_tol}
k =\text{max}\{k\mid\sigma_k\geq\epsilon_{\text{abs}}\}.
\end{equation}
Then the truncated matrices can be written in MATLAB notation as
\begin{equation}
V=Q_{\tilde V}\hat V(:,1:k),\quad W=Q_{\tilde W}\hat W(:,1:k)\text{diag}(\sigma_1,\ldots,\sigma_{k}).
\end{equation}
Note that the low-rank matrices ${X}$ obtained from \cref{eq:rel_tol} and \cref{eq:abs_tol} satisfy
\begin{equation} \label{eq:rel_tol_equiv}
\lVert X-\tilde{X} \rVert_F \leq \epsilon_\text{rel}\lVert {\tilde X}\rVert_F
\end{equation}
and
\begin{equation} \label{eq:abs_tol_equiv}
\lVert X-\tilde{X} \rVert_F \leq \epsilon_\text{abs}\sqrt{\tilde k-k},
\end{equation}
respectively. The right-hand side of \cref{eq:abs_tol_equiv} is bounded by $\sqrt{N_\xi}\epsilon_\text{abs}$ since in general $N_\xi<N_x$. The total cost of this computation is $O((N_x+N_\xi+{\tilde k}){\tilde k}^2)$. In the case where $\tilde k$ becomes larger than $N_\xi$, we compute instead a direct SVD for $\tilde X$, which requires a matrix-matrix product to compute $\tilde X$ and an SVD, with smaller total cost $O(N_x N_\xi\tilde k + N_xN_\xi^2)$.

\subsection{Low-rank multigrid}
\label{sec:lrmg}
The multigrid solver with low-rank truncation is given in \cref{alg:low-rank}. It uses truncation operators $\mathcal{T}_\text{rel}$ and $\mathcal{T}_\text{abs}$, which are defined using a relative and an absolute criterion, respectively. In each iteration, one multigrid cycle (\textsc{Vcycle}) is applied to the residual equation. Since the overall magnitudes of the singular values of the correction matrix $C^{(i)}$ decrease as $U^{(i)}$ converges to the exact solution (see \cref{fig:decay2} for example), it is suitable to use a relative truncation tolerance $\epsilon_\text{rel}$ inside the \textsc{Vcycle} function. In the smoothing function (\textsc{Smooth}), the iterate is truncated after each smoothing step using a relative criterion
\begin{equation} \label{eq:rel_tol1}
\lVert \mathcal{T}_\text{rel$_1$} (U) - U \rVert_F \leq \epsilon_\text{rel} \lVert F^h-\mathcal{A}^h(U^h_0) \rVert_F
\end{equation}
where $A^h$, $U^h_0$, and $F^h$ are arguments of the \textsc{Vcycle} function, and $F^h-\mathcal{A}^h(U^h_0)$ is the residual at the beginning of each V-cycle. In Line \ref{lst:line:13}, the residual is truncated via a more stringent relative criterion
\begin{equation} \label{eq:rel_tol2}
\lVert \mathcal{T}_\text{rel$_2$} (R^h) - R^h \rVert_F \leq \epsilon_\text{rel}h \lVert F^h-\mathcal{A}^h(U^h_0) \rVert_F
\end{equation}
where $h$ is the mesh size. In the main \textbf{while} loop, an absolute truncation criterion \cref{eq:abs_tol} with tolerance $\epsilon_\text{abs}$ is used and all the singular values of $U^{(i)}$ below $\epsilon_\text{abs}$ are dropped. The algorithm is terminated either when the largest singular value of the residual matrix $R^{(i)}$ is smaller than $\epsilon_\text{abs}$ or when the multigrid solution reaches the specified accuracy.

\begin{algorithm2e}
\caption{Multigrid with low-rank truncation}
\label{alg:low-rank}
\SetFuncSty{textsc}
\SetNlSty{}{}{:}
\DontPrintSemicolon
\SetKw{Init}{initialization}
\SetKwProg{Func}{function}{}{end}
\SetKwFunction{Vcycle}{Vcycle}
\SetKwFunction{Smooth}{Smooth}
\Init: $i=0$, $R^{(0)}=F$ in low-rank format, $r_0=\lVert F\rVert_F$ \;
\While{$r>tol*r_0$ $\&$ $i\leq maxit$}{
	$C^{(i)}=$ \Vcycle{$A,0,R^{(i)}$} \;
	$\tilde U^{(i+1)}=U^{(i)}+C^{(i)},$\tabto{6cm}$U^{(i+1)}=\mathcal{T}_{\text{abs}}(\tilde U^{(i+1)})$ \;
	$\tilde R^{(i+1)} = F-\mathcal{A}(U^{(i+1)}),$\tabto{6cm}$R^{(i+1)}=\mathcal{T}_{\text{abs}}(\tilde R^{(i+1)})$ \;
	$r=\lVert R^{(i+1)}\rVert_F$, $i=i+1$ \;
}
\BlankLine
\Func{$U^h=$ \Vcycle{$A^h,U_0^h,F^h$}}{
	\uIf{$h==h_0$}{
		solve $\mathcal{A}^h(U^h)=F^h$ directly \;
	}
	\Else{
		$U^h=$ \Smooth{$A^h,U_0^h,F^h$} \;
		$\tilde R^h=F^h-\mathcal{A}^h(U^h),$\tabto{5.5cm}$R^h=\mathcal{T}_\text{rel$_2$}(\tilde R^h)$ \label{lst:line:13} \;
		${R}^{2h}=\mathscr{R}(R^h)$ \;
		${C}^{2h}=$ \Vcycle{${A}^{2h},0,R^{2h}$} \;
		$U^{h}=U^h+\mathscr{P}(C^{2h})$ \;
		$U^h=$ \Smooth{$A^h,U^h,F^h$} \;
	}
}
\BlankLine
\Func{$U=$ \Smooth{$A,U,F$}}{
	\For{$\nu$ steps}{
		$\tilde U = U+\mathscr{S}(F-\mathcal{A}(U))$, \tabto{5.5cm}$U=\mathcal{T}_{\text{rel$_1$}}(\tilde U)$ \;
	}
}
\end{algorithm2e}

\begin{figure}[tbhp]
\includegraphics[width=0.75\textwidth]{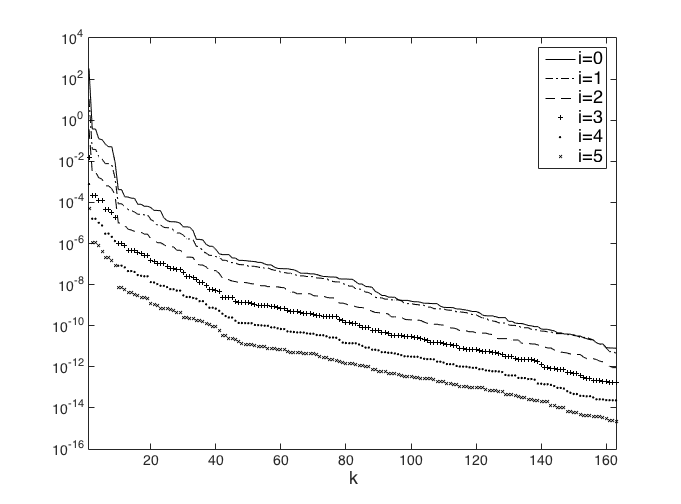}
\centering
\caption{Singular values of the correction matrix $C^{(i)}$ at multigrid iteration $i=0,1,\ldots,5$ (without truncation) for the benchmark problem in \cref{sec:numerical1} (with $\sigma=0.01$, $b=5$, $h=2^{-6}$, $m=8$, $p=3$).}
\label{fig:decay2}
\end{figure}

Note that the post-smoothing is not explicitly required in \cref{alg:mg,alg:low-rank}, and we include it just for sake of completeness. Also, in \cref{alg:low-rank}, if the smoothing operator has the form $\mathscr{S}=S_1\otimes S_2$, then for any matrix with a low-rank factorization $X=VW^T$, application of the smoothing operator gives
\begin{equation} \label{eq:lr_smooth}
\mathscr{S}(X)=\mathscr{S}(VW^T)=(S_2V)(S_1W)^T,
\end{equation} 
so that the result is again the outer product of two matrices of the same low rank. The prolongation and restriction operators \cref{eq:grid_transfer} are implemented in a similar manner. Thus, the smoothing and grid-transfer operators do not affect the ranks of matricized quantities in \cref{alg:low-rank}.

\subsection{Convergence analysis}
\label{sec:convergence}
In order to show that \cref{alg:low-rank} is convergent, we need to know how truncation affects the contraction of error. Consider the case of a two-grid algorithm for the linear system $A\mathbf{u}=\mathbf{f}$, where the coarse-grid solve is exact and no post-smoothing is done. Let $\bar{A}$ be the coefficient matrix on the coarse grid, let $\mathbf{e}^{(i)}=\mathbf{u}-\mathbf{u}^{(i)}$ be the error associated with $\mathbf{u}^{(i)}$, and let $\mathbf{r}^{(i)}=\mathbf{f}-A\mathbf{u}^{(i)}=A\mathbf{e}^{(i)}$ be the residual. It is shown in \cite{ElFu07} that if no truncation is done, the error after a two-grid cycle becomes
\begin{equation}
\label{eq:error_exact}
\mathbf{e}_\text{notrunc}^{(i+1)}=(A^{-1}-\mathscr{P}\bar{A}^{-1}\mathscr{R})A(I-Q^{-1}A)^\nu \mathbf{e}^{(i)},
\end{equation}
and
\begin{equation}
\label{eq:bound_exact}
\lVert \mathbf{e}_\text{notrunc}^{(i+1)}\rVert_A \leq C\eta(\nu)\lVert \mathbf{e}^{(i)}\rVert_A,
\end{equation}
where $\nu$ is the number of pre-smoothing steps, $C$ is a constant, and $\eta(\nu)\rightarrow 0$ as $\nu\rightarrow\infty$. The proof consists of establishing the smoothing property
\begin{equation} \label{eq:smooth_property}
\lVert A(I-Q^{-1}A)^\nu\mathbf{y} \rVert_2 \leq \eta(\nu)\lVert \mathbf{y}\rVert_A,\quad \forall \mathbf{y}\in\mathbb{R}^{N_xN_\xi},
\end{equation}
and the approximation property
\begin{equation} \label{eq:approx_property}
\lVert (A^{-1}-\mathscr{P}\bar{A}^{-1}\mathscr{R})\mathbf{y} \rVert_A \leq C\lVert \mathbf{y}\rVert_2,\quad \forall \mathbf{y}\in\mathbb{R}^{N_xN_\xi},
\end{equation}
and applying these bounds to \cref{eq:error_exact}.

Now we derive an error bound for \cref{alg:low-rank}. The result is presented in two steps. First, we consider the \textsc{Vcycle} function only; the following lemma shows the effect of the relative truncations defined in \cref{eq:rel_tol1,eq:rel_tol2}.

\begin{lemma}
Let $\mathbf{u}^{(i+1)}=\textsc{Vcycle}(A,\mathbf{u}^{(i)},\mathbf{f})$ and let $\mathbf{e}^{(i+1)}=\mathbf{u}-\mathbf{u}^{(i+1)}$ be the associated error. Then
\begin{equation}
\label{eq:bound_trunc1}
\lVert \mathbf{e}^{(i+1)}\rVert_A \leq {C_1}(\nu) \lVert \mathbf{e}^{(i)}\rVert_A,
\end{equation}
where, for small enough $\epsilon_\text{rel}$ and large enough $\nu$, $C_1(\nu)<1$ independent of the mesh size $h$.
\end{lemma}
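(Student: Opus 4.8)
The plan is to treat the truncated V-cycle, in the two-grid setting of the surrounding discussion (exact coarse solve, no post-smoothing), as a perturbation of the exact two-grid iteration \cref{eq:error_exact}. The only operations that destroy linearity are the smoothing truncations $\mathcal{T}_\text{rel$_1$}$ applied after each of the $\nu$ pre-smoothing steps and the single residual truncation $\mathcal{T}_\text{rel$_2$}$ on Line~\ref{lst:line:13}. First I would record each as an additive error: writing the vectorized perturbation introduced by the $s$-th smoothing truncation as $\boldsymbol{\delta}_s$ and that from the residual truncation as $\boldsymbol{\gamma}$, the defining relations \cref{eq:rel_tol1,eq:rel_tol2} give $\lVert\boldsymbol{\delta}_s\rVert_2\le\epsilon_\text{rel}\lVert\mathbf{r}^{(i)}\rVert_2$ and $\lVert\boldsymbol{\gamma}\rVert_2\le\epsilon_\text{rel}h\lVert\mathbf{r}^{(i)}\rVert_2$. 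This uses that at the top level the initial V-cycle residual is exactly $F^h-\mathcal{A}^h(U^h_0)=\mathbf{f}-A\mathbf{u}^{(i)}=\mathbf{r}^{(i)}$, and that the Frobenius norm of a matricized quantity equals the Euclidean norm of its vectorization.

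Next I would propagate these perturbations through the remaining linear steps. After pre-smoothing the error is $(I-Q^{-1}A)^\nu\mathbf{e}^{(i)}-\sum_{s=1}^{\nu}(I-Q^{-1}A)^{\nu-s}\boldsymbol{\delta}_s$, and applying the truncated residual computation followed by the exact coarse-grid correction $I-\mathscr{P}\bar{A}^{-1}\mathscr{R}A=(A^{-1}-\mathscr{P}\bar{A}^{-1}\mathscr{R})A$ yields the decomposition
\begin{equation*}
\mathbf{e}^{(i+1)}=\mathbf{e}_\text{notrunc}^{(i+1)}-(A^{-1}-\mathscr{P}\bar{A}^{-1}\mathscr{R})A\sum_{s=1}^{\nu}(I-Q^{-1}A)^{\nu-s}\boldsymbol{\delta}_s-\mathscr{P}\bar{A}^{-1}\mathscr{R}\boldsymbol{\gamma}.
\end{equation*}
The leading term is the untruncated error, already bounded by $C\eta(\nu)\lVert\mathbf{e}^{(i)}\rVert_A$ via \cref{eq:bound_exact}.

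Then I would bound the two perturbation sums in the $A$-norm. For the smoothing terms I would apply the approximation property \cref{eq:approx_property} to pass to the Euclidean norm and then the smoothing property \cref{eq:smooth_property}, giving $\lVert(A^{-1}-\mathscr{P}\bar{A}^{-1}\mathscr{R})A(I-Q^{-1}A)^{\nu-s}\boldsymbol{\delta}_s\rVert_A\le C\eta(\nu-s)\lVert\boldsymbol{\delta}_s\rVert_A$, followed by $\lVert\boldsymbol{\delta}_s\rVert_A\le\sqrt{\lambda_{\max}(A)}\,\lVert\boldsymbol{\delta}_s\rVert_2$; for the two-dimensional discretization here $\lambda_{\max}(A)=O(1)$, so the constant is $h$-independent. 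For the residual term I would split $\mathscr{P}\bar{A}^{-1}\mathscr{R}=A^{-1}-(A^{-1}-\mathscr{P}\bar{A}^{-1}\mathscr{R})$: the approximation property controls the second piece, while for the first, $\lVert A^{-1}\boldsymbol{\gamma}\rVert_A=\lVert\boldsymbol{\gamma}\rVert_{A^{-1}}\le\lambda_{\min}(A)^{-1/2}\lVert\boldsymbol{\gamma}\rVert_2=O(h^{-1})\lVert\boldsymbol{\gamma}\rVert_2$, and the factor $h$ built into \cref{eq:rel_tol2} cancels this $O(h^{-1})$. Converting with $\lVert\mathbf{r}^{(i)}\rVert_2=\lVert A\mathbf{e}^{(i)}\rVert_2\le\sqrt{\lambda_{\max}(A)}\,\lVert\mathbf{e}^{(i)}\rVert_A$, I would collect the estimate as $C_1(\nu)=C\eta(\nu)+D(\nu)\epsilon_\text{rel}$, where $D(\nu)$ is finite and $h$-independent for each fixed $\nu$. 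Choosing $\nu$ large enough that $C\eta(\nu)<1/2$ and then $\epsilon_\text{rel}$ small enough that $D(\nu)\epsilon_\text{rel}<1/2$ gives $C_1(\nu)<1$; the order of quantifiers matters because $D(\nu)$, which contains $\sum_{s}\eta(\nu-s)$, may grow with $\nu$.

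The main obstacle is establishing that the constants multiplying the truncation perturbations are genuinely independent of the mesh size. The delicate point is the residual term: one must verify that the $O(h^{-1})$ growth of $A^{-1}$ measured in the $A$-norm is precisely offset by the extra factor $h$ in the residual tolerance \cref{eq:rel_tol2}, which is the structural reason that tolerance is more stringent than \cref{eq:rel_tol1}. Confirming $\lambda_{\max}(A)=O(1)$, so that the smoothing-perturbation constants and the residual-to-error conversion remain bounded, and that the approximation- and smoothing-property constants $C$ and $\eta(\cdot)$ inherited from \cite{ElFu07} are themselves $h$-independent, then completes the argument.
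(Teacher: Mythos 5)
Your proposal is correct and follows essentially the same route as the paper's proof: the same additive bookkeeping of the truncation perturbations (with $\lVert\delta_s\rVert_2\le\epsilon_\text{rel}\lVert\mathbf{r}^{(i)}\rVert_2$ and the residual perturbation of size $\epsilon_\text{rel}h\lVert\mathbf{r}^{(i)}\rVert_2$), the same decomposition of $\mathbf{e}^{(i+1)}$ into $\mathbf{e}^{(i+1)}_\text{notrunc}$ plus terms controlled by the approximation property \cref{eq:approx_property}, and the same key cancellation of the $O(h^{-1})$ factor from $\lVert A^{-1}\rVert_2^{1/2}$ against the extra factor $h$ in \cref{eq:rel_tol2}. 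The only minor variant is that you bound the propagated smoothing perturbations via the smoothing property \cref{eq:smooth_property} with exponent $\nu-s$, whereas the paper uses the symmetry of $Q$ and the spectral radius $\rho(I-Q^{-1}A)^{\nu-s}$ times $\lVert A^{1/2}\rVert_2^2$; both yield an $h$-independent constant for fixed $\nu$, and your ordering of quantifiers (choose $\nu$, then $\epsilon_\text{rel}$) matches what the lemma's statement requires.
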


\begin{proof}
For $s=1,\ldots,\nu$, let $\tilde{\mathbf{u}}^{(i)}_s$ be the quantity computed after application of the smoothing operator at step $s$ before truncation, and let $\mathbf{u}^{(i)}_s$ be the modification obtained from truncation by $\mathcal{T}_{\text{rel}_1}$ of \cref{eq:rel_tol1}. For example,
\begin{equation}
\tilde{\mathbf{u}}_1^{(i)} = \mathbf{u}^{(i)}+Q^{-1}(\mathbf{f}-A\mathbf{u}^{(i)}),\quad {\mathbf{u}}^{(i)}_1=\mathcal{T}_\text{rel$_1$}(\tilde{\mathbf{u}}^{(i)}_1).
\end{equation}
Denote the associated error as ${\mathbf{e}}^{(i)}_s=\mathbf{u}-{\mathbf{u}}^{(i)}_s$. From \cref{eq:rel_tol1}, we have
\begin{equation}
{\mathbf{e}}_1^{(i)} = (I-Q^{-1}A)\mathbf{e}^{(i)} + \delta_1^{(i)},\quad \text{where }\lVert \delta_1^{(i)}\rVert_2\leq \epsilon_\text{rel} \lVert \mathbf{r}^{(i)}\rVert_2.
\end{equation}
Similarly, after $\nu$ smoothing steps,
\begin{equation}
\label{eq:error_smooth}
\begin{aligned}
{\mathbf{e}}_\nu^{(i)} &= (I-Q^{-1}A)^\nu\mathbf{e}^{(i)} + \Delta_\nu^{(i)}\\
&=(I-Q^{-1}A)^\nu\mathbf{e}^{(i)} + (I-Q^{-1}A)^{\nu-1}\delta_1^{(i)} + \cdots + (I-Q^{-1}A)\delta_{\nu-1}^{(i)} + \delta_\nu^{(i)},
\end{aligned}
\end{equation}
where 
\begin{equation}
\label{eq:rel_tol_1}
\lVert \delta_s^{(i)}\rVert_2\leq \epsilon_\text{rel} \lVert \mathbf{r}^{(i)}\rVert_2,\quad s=1,\ldots,\nu.
\end{equation}
In Line \ref{lst:line:13} of \cref{alg:low-rank}, the residual $\tilde{\mathbf{r}}^{(i)}_\nu=A{\mathbf{e}}_\nu^{(i)}$ is truncated to ${\mathbf{r}}_\nu^{(i)}$ via \cref{eq:rel_tol2}, so that
\begin{equation}
\label{eq:rel_tol_2}
\lVert {\mathbf{r}}^{(i)}_\nu - \tilde{\mathbf{r}}^{(i)}_\nu \rVert_2 \leq \epsilon_\text{rel} h\lVert \mathbf{r}^{(i)}\rVert_2.
\end{equation}
Let $\tau^{(i)}={\mathbf{r}}^{(i)}_\nu - \tilde{\mathbf{r}}^{(i)}_\nu$. Referring to \cref{eq:error_exact,eq:error_smooth}, we can write the error associated with $\mathbf{u}^{(i+1)}$ as
\begin{equation}
\label{eq:error_trunc}
\begin{aligned}
\mathbf{e}^{(i+1)} & = {\mathbf{e}}_\nu^{(i)} - \mathscr{P}\bar{A}^{-1}\mathscr{R}{\mathbf{r}}_\nu^{(i)}\\
& = (I-\mathscr{P}\bar{A}^{-1}\mathscr{R}A) {\mathbf{e}}_\nu^{(i)} - \mathscr{P}\bar{A}^{-1}\mathscr{R} \tau^{(i)}\\
& = \mathbf{e}^{(i+1)}_\text{notrunc} + (A^{-1}-\mathscr{P}\bar{A}^{-1}\mathscr{R})A\Delta_\nu^{(i)}- \mathscr{P}\bar{A}^{-1}\mathscr{R} \tau^{(i)}\\
& = \mathbf{e}^{(i+1)}_\text{notrunc} + (A^{-1}-\mathscr{P}\bar{A}^{-1}\mathscr{R})(A\Delta_\nu^{(i)}+\tau^{(i)}) - A^{-1}\tau^{(i)}.
\end{aligned}
\end{equation}
Applying the approximation property \cref{eq:approx_property} gives
\begin{equation}
\label{eq:bound_1}
\lVert (A^{-1}-\mathscr{P}\bar{A}^{-1}\mathscr{R})(A\Delta_\nu^{(i)}+\tau^{(i)}) \rVert_A \leq C(\lVert A\Delta_\nu^{(i)}\rVert_2 +\lVert \tau^{(i)}\rVert_2).
\end{equation}
Using the fact that for any matrix $B\in\mathbb{R}^{N_xN_\xi\times N_xN_\xi}$,
\begin{equation}
\sup_{\mathbf{y}\neq\mathbf{0}} \frac{\lVert B\mathbf{y}\rVert_A}{\lVert\mathbf{y}\rVert_A}
= \sup_{\mathbf{y}\neq\mathbf{0}} \frac{\lVert A^{1/2}B\mathbf{y}\rVert_2}{\lVert A^{1/2}\mathbf{y}\rVert_2}
= \sup_{\mathbf{z}\neq\mathbf{0}} \frac{\lVert A^{1/2}BA^{-1/2}\mathbf{z}\rVert_2}{\lVert \mathbf{z}\rVert_2}
= \lVert A^{1/2}BA^{-1/2}\rVert_2,
\end{equation}
we get
\begin{equation}
\begin{aligned} \label{eq:a_delta}
\lVert A(I-Q^{-1}A)^{\nu-s}\delta_s^{(i)}\rVert_2 & \leq \lVert A^{1/2}\rVert_2\, \lVert (I-Q^{-1}A)^{\nu-s}\delta_s^{(i)}\rVert_A\\
& \leq \lVert A^{1/2}\rVert_2\, \lVert A^{1/2}(I-Q^{-1}A)^{\nu-s}A^{-1/2}\rVert_2\, \lVert\delta_s^{(i)}\rVert_A\\
& \leq \rho(I-Q^{-1}A)^{\nu-s}\lVert A^{1/2}\rVert_2^2\, \lVert\delta_s^{(i)}\rVert_2\\
\end{aligned}
\end{equation}
where $\rho$ is the spectral radius. We have used the fact that $A^{1/2}(I-Q^{-1}A)^{\nu-s}A^{-1/2}$ is a symmetric matrix (assuming $Q$ is symmetric). Define $d_1(\nu)=(\rho(I-Q^{-1}A)^{\nu-1}+\cdots+\rho(I-Q^{-1}A)+1) \lVert A^{1/2}\rVert_2^2$. Then \cref{eq:rel_tol_1,eq:rel_tol_2} imply that
\begin{equation}
\label{eq:bound_2}
\begin{aligned}
\lVert A\Delta_\nu^{(i)}\rVert_2 +\lVert \tau^{(i)}\rVert_2 & \leq \epsilon_\text{rel}(d_1(\nu)+h)\lVert \mathbf{r}^{(i)}\rVert_2\\
& \leq \epsilon_\text{rel}(d_1(\nu)+h)\lVert A^{1/2}\rVert_2\,\lVert \mathbf{e}^{(i)}\rVert_A.
\end{aligned}
\end{equation}
On the other hand, 
\begin{equation}
\label{eq:bound_3}
\begin{aligned}
\lVert A^{-1}\tau^{(i)} \rVert_A =(A^{-1}\tau^{(i)},\tau^{(i)})^{1/2} &\leq \lVert A^{-1}\rVert_2^{1/2}\,\lVert\tau^{(i)}\rVert_2 \\
& \leq \epsilon_\text{rel}h \lVert A^{-1}\rVert_2^{1/2}\, \lVert \mathbf{r}^{(i)}\rVert_2\\
&\leq \epsilon_\text{rel}h\lVert A^{-1}\rVert_2^{1/2}\,\lVert A^{1/2}\rVert_2\,\lVert \mathbf{e}^{(i)}\rVert_A.
\end{aligned}
\end{equation}
Combining \cref{eq:bound_exact,eq:error_trunc,eq:bound_1,eq:bound_2,eq:bound_3}, we conclude that
\begin{equation}
\lVert \mathbf{e}^{(i+1)}\rVert_A \leq {C_1}(\nu) \lVert \mathbf{e}^{(i)}\rVert_A
\end{equation}
where
\begin{equation}
{C_1}(\nu) = C\eta(\nu) +\epsilon_\text{rel} (C(d_1(\nu)+h)+h\lVert A^{-1}\rVert_2^{1/2})\lVert A^{1/2}\rVert_2.
\end{equation}
Note that $\rho(I-Q^{-1}A)<1$, $\lVert A\rVert_2$ is bounded by a constant, and $\lVert A^{-1}\rVert_2$ is of order $O(h^{-2})$ \cite{PoEl09}. Thus, for small enough $\epsilon_\text{rel}$ and large enough $\nu$, $C_1(\nu)$ is bounded below 1 independent of $h$.
\end{proof}

Next, we adjust this argument by considering the effect of the absolute truncations in the main \textbf{while} loop. In \cref{alg:low-rank}, the \textsc{Vcycle} is used for the residual equation, and the updated solution $\tilde{\mathbf{u}}^{(i+1)}$ and residual $\tilde{\mathbf{r}}^{(i+1)}$ are truncated to $\mathbf{u}^{(i+1)}$ and $\mathbf{r}^{(i+1)}$, respectively, using an absolute truncation criterion as in \cref{eq:abs_tol}. Thus,  at the $i$th iteration ($i>1$), the residual passed to the \textsc{Vcycle} function is in fact a perturbed residual, i.e.,
\begin{equation}
\mathbf{r}^{(i)}=\tilde{\mathbf{r}}^{(i)}+\beta=A\mathbf{e}^{(i)}+\beta,\quad \text{where } \lVert \beta \rVert_2\leq \sqrt{N_\xi}\epsilon_\text{abs}. 
\end{equation}
It follows that in the first smoothing step,
\begin{equation}
\tilde{\mathbf{u}}_1^{(i)} = \mathbf{u}^{(i)}+Q^{-1}(\mathbf{f}-A\mathbf{u}^{(i)}+\beta),\quad {\mathbf{u}}^{(i)}_1=\mathcal{T}_\text{rel$_1$}(\tilde{\mathbf{u}}^{(i)}_1),
\end{equation}
and this introduces an extra term in $\Delta_\nu^{(i)}$ (see \cref{eq:error_smooth}),
\begin{equation}
\Delta_\nu^{(i)}= (I-Q^{-1}A)^{\nu-1}\delta_1^{(i)} + \cdots + (I-Q^{-1}A)\delta_{\nu-1}^{(i)} + \delta_\nu^{(i)} -(I-Q^{-1}A)^{\nu-1}Q^{-1}\beta.
\end{equation}
As in the derivation of \cref{eq:a_delta}, we have
\begin{equation}
\lVert A(I-Q^{-1}A)^{\nu-1}Q^{-1}\beta \rVert_2 \leq \rho(I-Q^{-1}A)^{\nu-1}\lVert A^{1/2}\rVert_2^2\,\lVert Q^{-1}\rVert_2\,\lVert\beta\rVert_2.
\end{equation}
In the case of a damped Jacobi smoother (see \cref{eq:jacobi_smoother}), $\lVert Q^{-1}\rVert_2$ is bounded by a constant. Denote $d_2(\nu)=\rho(I-Q^{-1}A)^{\nu-1}\lVert A^{1/2}\rVert_2^2\,\lVert Q^{-1}\rVert_2$. Also note that $\lVert\mathbf{r}^{(i)}\rVert_2 \leq \lVert A^{1/2}\rVert_2\,\lVert\mathbf{e}^{(i)}\rVert_A+\lVert\beta\rVert$. Then \cref{eq:bound_2,eq:bound_3} are modified to
\begin{equation}  \label{eq:extra_1}
\begin{aligned}
& \lVert A\Delta_\nu^{(i)}\rVert_2 +\lVert \tau^{(i)}\rVert_2 \\
& \leq \epsilon_\text{rel}(d_1(\nu)+h)\lVert \mathbf{r}^{(i)}\rVert_2+d_2(\nu)\lVert\beta\rVert_2\\
& \leq \epsilon_\text{rel}(d_1(\nu)+h)\lVert A^{1/2}\rVert_2\,\lVert \mathbf{e}^{(i)}\rVert_A + (d_2(\nu)+ \epsilon_\text{rel}(d_1(\nu)+h))\lVert\beta\rVert_2,
\end{aligned}
\end{equation}
and
\begin{equation}  \label{eq:extra_2}
\lVert A^{-1}\tau^{(i)} \rVert_A \leq \epsilon_\text{rel}h\lVert A^{-1}\rVert_2^{1/2}\,\lVert A^{1/2}\rVert_2\,\lVert \mathbf{e}^{(i)}\rVert_A + \epsilon_\text{rel}h\lVert A^{-1}\rVert_2^{1/2}\,\lVert\beta\rVert.
\end{equation}
As we truncate the updated solution $\tilde{\mathbf{u}}^{(i+1)}$, we have
\begin{equation}  \label{eq:extra_3}
\mathbf{u}^{(i+1)}=\tilde{\mathbf{u}}^{(i+1)} + \gamma,\quad\text{where }\lVert\gamma\rVert_2\leq \sqrt{N_\xi}\epsilon_\text{abs}.
\end{equation}
Let 
\begin{equation}  \label{eq:extra_4}
C_2(\nu)=Cd_2(\nu)  +\epsilon_\text{rel} (C(d_1(\nu)+h)+h\lVert A^{-1}\rVert_2^{1/2})+\lVert A^{1/2}\rVert_2.
\end{equation}
From \cref{eq:extra_1,eq:extra_2,eq:extra_3,eq:extra_4}, we conclude with the following theorem:
\begin{theorem} \label{thm:convergence}
Let $\mathbf{e}^{(i)}=\mathbf{u}-\mathbf{u}^{(i)}$ denote the error at the $i$th iteration of \cref{alg:low-rank}. Then
\begin{equation} \label{eq:bound_trunc2}
\lVert \mathbf{e}^{(i+1)}\rVert_A \leq {C_1}(\nu)\lVert \mathbf{e}^{(i)}\rVert_A + C_2(\nu)\sqrt{N_\xi}\epsilon_\text{abs},
\end{equation}
where $C_1(\nu)<1$ for large enough $\nu$ and small enough $\epsilon_\text{rel}$, and $C_2(\nu)$ is bounded by a constant. Also, \cref{eq:bound_trunc2} implies that
\begin{equation}
\lVert \mathbf{e}^{(i)}\rVert_A \leq C_1^i(\nu) \lVert \mathbf{e}^{(0)}\rVert_A + \frac{1-C_1^{i}(\nu)}{1-C_1(\nu)} C_2(\nu)\sqrt{N_\xi}\epsilon_\text{abs},
\end{equation}
i.e., the $A$-norm of the error for the low-rank multigrid solution at the $i$th iteration is bounded by $C_1^i(\nu) \lVert \mathbf{e}^{(0)}\rVert_A+O(\sqrt{N_\xi}\epsilon_\text{abs})$. Thus, \cref{alg:low-rank} converges until the $A$-norm of the error becomes as small as $O(\sqrt{N_\xi}\epsilon_\text{abs})$.
\end{theorem}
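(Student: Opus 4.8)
The plan is to reuse the one-V-cycle error recursion established in the Lemma and then bookkeep the two places where the absolute truncation $\mathcal{T}_\text{abs}$ enters the main \textbf{while} loop. There are exactly two new perturbations relative to the Lemma: the residual fed into the \textsc{Vcycle} is the truncated residual $\mathbf{r}^{(i)}=A\mathbf{e}^{(i)}+\beta$, and the updated iterate is truncated to $\mathbf{u}^{(i+1)}=\tilde{\mathbf{u}}^{(i+1)}+\gamma$, where both $\beta$ and $\gamma$ obey the absolute bound $\sqrt{N_\xi}\epsilon_\text{abs}$ from \cref{eq:abs_tol,eq:abs_tol_equiv}. First I would retrace the derivation of \cref{eq:error_trunc}, but starting from this perturbed residual and using the residual-correction equivalence (adding the \textsc{Vcycle} correction on the residual equation is the same as smoothing the solution with the perturbed right-hand side). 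The only structural changes are that the accumulated smoothing error $\Delta_\nu^{(i)}$ acquires the extra summand $-(I-Q^{-1}A)^{\nu-1}Q^{-1}\beta$ recorded just above the theorem, and that the final error acquires an additional $-\gamma$ from truncating $\tilde{\mathbf{u}}^{(i+1)}$.

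The second step is to bound every contribution in the $A$-norm. The no-truncation part is controlled by the smoothing and approximation properties \cref{eq:smooth_property,eq:approx_property} exactly as in \cref{eq:bound_exact}, giving the $C\eta(\nu)\lVert\mathbf{e}^{(i)}\rVert_A$ piece. The relative-truncation errors $\delta_s^{(i)}$ and $\tau^{(i)}$ are handled by \cref{eq:a_delta,eq:bound_1} together with the $\beta$-augmented estimates \cref{eq:extra_1,eq:extra_2}, which already separate the terms proportional to $\lVert\mathbf{e}^{(i)}\rVert_A$ from those proportional to $\lVert\beta\rVert_2$. The $\gamma$-term is bounded crudely by $\lVert\gamma\rVert_A\le\lVert A^{1/2}\rVert_2\lVert\gamma\rVert_2$. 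Collecting the coefficients of $\lVert\mathbf{e}^{(i)}\rVert_A$ reproduces exactly the constant $C_1(\nu)$ from the Lemma, while gathering the coefficients of $\lVert\beta\rVert_2$ and $\lVert\gamma\rVert_2$ and replacing both by $\sqrt{N_\xi}\epsilon_\text{abs}$ yields the constant $C_2(\nu)$ of \cref{eq:extra_4}; this establishes the one-step bound \cref{eq:bound_trunc2}.

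The third step is the telescoping of this linear recurrence. Writing $b=C_2(\nu)\sqrt{N_\xi}\epsilon_\text{abs}$ and unrolling $\lVert\mathbf{e}^{(i+1)}\rVert_A\le C_1(\nu)\lVert\mathbf{e}^{(i)}\rVert_A+b$ gives $\lVert\mathbf{e}^{(i)}\rVert_A\le C_1^i(\nu)\lVert\mathbf{e}^{(0)}\rVert_A+(1+C_1(\nu)+\cdots+C_1^{i-1}(\nu))\,b$, and summing the finite geometric series produces the stated factor $\frac{1-C_1^{i}(\nu)}{1-C_1(\nu)}$. Since $C_1(\nu)<1$, the second term remains bounded by $\frac{C_2(\nu)}{1-C_1(\nu)}\sqrt{N_\xi}\epsilon_\text{abs}$ as $i\to\infty$, which is the asserted $O(\sqrt{N_\xi}\epsilon_\text{abs})$ floor.

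I expect the main obstacle to be verifying that $C_2(\nu)$ is genuinely bounded by a constant independent of both $\nu$ and $h$, rather than the routine bookkeeping above. This requires observing that the geometric factor $d_1(\nu)=\lVert A^{1/2}\rVert_2^2\sum_{j=0}^{\nu-1}\rho(I-Q^{-1}A)^j$ is uniformly bounded by $\lVert A^{1/2}\rVert_2^2/(1-\rho(I-Q^{-1}A))$ because the smoother is convergent, that $d_2(\nu)=\rho(I-Q^{-1}A)^{\nu-1}\lVert A^{1/2}\rVert_2^2\lVert Q^{-1}\rVert_2\le\lVert A^{1/2}\rVert_2^2\lVert Q^{-1}\rVert_2$, and---crucially---that the seemingly dangerous factor $h\lVert A^{-1}\rVert_2^{1/2}$ is in fact $O(1)$ since $\lVert A^{-1}\rVert_2=O(h^{-2})$. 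The uniform boundedness of $\lVert Q^{-1}\rVert_2$ must be invoked for the specific damped Jacobi smoother, as the text notes; this is what keeps $d_2(\nu)$, and hence $C_2(\nu)$, controlled.
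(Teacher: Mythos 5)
Your proposal is correct and follows essentially the same route as the paper: it reuses the Lemma's V-cycle error decomposition, introduces the two absolute-truncation perturbations $\beta$ (in the residual fed to \textsc{Vcycle}, which adds the term $-(I-Q^{-1}A)^{\nu-1}Q^{-1}\beta$ to $\Delta_\nu^{(i)}$) and $\gamma$ (from truncating $\tilde{\mathbf{u}}^{(i+1)}$), bounds them via \cref{eq:extra_1,eq:extra_2,eq:extra_3} to assemble exactly the constants $C_1(\nu)$ and $C_2(\nu)$ of \cref{eq:extra_4}, and then unrolls the linear recurrence into the geometric-series bound. Your closing verification that $C_2(\nu)$ stays bounded---via $d_1(\nu)\le\lVert A^{1/2}\rVert_2^2/(1-\rho(I-Q^{-1}A))$, the damped-Jacobi bound on $\lVert Q^{-1}\rVert_2$, and $h\lVert A^{-1}\rVert_2^{1/2}=O(1)$---makes explicit the same facts the paper invokes.
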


It can be shown that the same result holds if post-smoothing is used. Also, the convergence of full (recursive) multigrid with these truncation operations can be established following an inductive argument analogous to that in the deterministic case (see, e.g., \cite{ElSi14,Hack85}). Besides, in \cref{alg:low-rank}, the truncation on $\tilde{\mathbf{r}}^{(i+1)}$ imposes a stopping criterion, i.e.,
\begin{equation} \label{eq:stopping}
\begin{aligned}
\lVert \tilde{\mathbf{r}}^{(i+1)} \rVert_2 & \leq \lVert \tilde{\mathbf{r}}^{(i+1)}-\mathbf{r}^{(i+1)}\rVert_2 + \lVert {\mathbf{r}}^{(i+1)} \rVert_2\\
& \leq \sqrt{N_\xi}\epsilon_\text{abs} + tol*r_0.
\end{aligned}
\end{equation} 
In \cref{sec:numerical} we will vary the value of $\epsilon_\text{abs}$ and see how the low-rank multigrid solver works compared with \cref{alg:mg} where no truncation is done.

\begin{remark}
It is shown in \cite{PoEl09} that for \cref{eq:galerkin}, with constant mean $c_0$ and standard deviation $\sigma$,
\begin{equation}
\lVert A\rVert_2 = \alpha(c_0 + \sigma C_{p+1}^\text{max} \sum_{l=1}^m \sqrt{\lambda_l} \lVert c_l(x)\rVert_\infty ),
\end{equation}
where $C_{p+1}^\text{max}$ is the maximal root of an orthogonal polynomial of degree $p+1$, and $\alpha$ is a constant independent of $h$, $m$, and $p$. If Legendre polynomials on the interval $[-1,1]$ are used, $C_{p+1}^\text{max}<1$. Since both $C_1$ and $C_2$ in \cref{thm:convergence} are related to $\lVert A\rVert_2$, the convergence rate of \cref{alg:low-rank} will depend on $m$. However, if the eigenvalues $\{\lambda_l\}$ decay fast, this dependence is negligable.
\end{remark}

\begin{remark}
If instead a relative truncation is used in the \textbf{while} loop so that
\begin{equation}
\mathbf{r}^{(i+1)}=\tilde{\mathbf{r}}^{(i+1)}+\beta=A\mathbf{e}^{(i+1)}+\beta,\quad \text{where } \lVert \beta \rVert_2\leq \epsilon_\text{rel}\lVert\tilde{\mathbf{r}}^{(i+1)}\rVert,
\end{equation}
then a similar convergence result can be derived, and the algorithm stops when
\begin{equation}
\lVert \tilde{\mathbf{r}}^{(i+1)} \rVert_2  \leq \frac{tol*r}{1-\epsilon_\text{rel}}.
\end{equation}
However, the relative truncation in general results in a larger rank for $\mathbf{r}^{(i)}$, and the improvement in efficiency will be less significant. 
\end{remark}
 
\section{Numerical experiments}
\label{sec:numerical}
Consider the benchmark problem with a two-dimensional spatial domain $D=(-1,1)^2$ and constant source term $f=1$. We look at two different forms for the covariance function $r(x,y)$ of the diffusion coefficient $c(x,\omega).$

\subsection{Exponential covariance}
\label{sec:numerical1}
The exponential covariance function takes the form
\begin{equation} \label{eq:cov}
r(x,y)=\sigma^2\text{exp}\left(-\frac{1}{b} \lVert x-y\rVert_1\right).
\end{equation}
This is a convenient choice because there are known analytic solutions for the eigenpair ($\lambda_l$,$c_l(x)$) \cite{GhSp03}. In the KL expansion, take $c_0(x)=1$ and $\{\xi_l\}_{l=1}^m$ independent and uniformly  distributed on $[-1,1]$:
\begin{equation}
\label{eq:ex_kl}
c(x,\omega) = c_0(x) + \sqrt{3}\sum_{l=1}^m \sqrt{\lambda_l}c_l(x)\xi_l(\omega).
\end{equation}
Then $\sqrt{3}\xi_l$ has zero mean and unit variance, and Legendre polynomials are used as basis functions for the stochastic space. The correlation length $b$ affects the decay of $\{\lambda_l\}$ in the KL expansion. The number of random variables $m$ is chosen so that 
\begin{equation} \label{eq:lambda}
\left(\sum_{l=1}^m\lambda_l\right)\Big/\left(\sum_{l=1}^M\lambda_l\right)\geq 95\%.
\end{equation}
Here $M$ is a large number which we set as 1000. 

We now examine the performance of the multigrid solver with low-rank truncation. We employ a damped Jacobi smoother, with 
\begin{equation} \label{eq:jacobi_smoother}
Q=\frac{1}{\omega}D,\quad D=\text{diag}(A)=I\otimes\text{diag}(K_0),
\end{equation}
and apply three smoothing steps ($\nu=3$) in the \textsc{Smooth} function. Set the multigrid $tol=10^{-6}$. As shown in \cref{eq:stopping}, the relative residual $\lVert F-\mathcal{A}(U^{(i)})\rVert_F/\lVert F\rVert_F$ for the solution $U^{(i)}$ produced in \cref{alg:low-rank} is related to the value of the truncation tolerance $\epsilon_\text{abs}$. In all the experiments, we also run the multigrid solver without truncation to reach a relative residual that is closest to what we get from the low-rank multigrid solver. We fix the relative truncation tolerance $\epsilon_\text{rel}$ as $10^{-2}$. (The truncation criteria in \cref{eq:rel_tol1,eq:rel_tol2} are needed for the analysis. In practice we found the performance with the relative criterion in \cref{eq:rel_tol_equiv} to be essentially the same as the results shown in this section.) The numerical results, i.e., the rank of multigrid solution, the number of iterations, and the elapsed time (in seconds) for solving the Galerkin system, are given in \cref{table:n,table:m,table:sigma}. In all the tables, the 3rd and 4th columns are the results of low-rank multigrid with different values of truncation tolerance $\epsilon_\text{abs}$, and for comparison the last two columns show the results for the multigrid solver without truncation. The Galerkin systems are generated from the Incompressible Flow and Iterative Solver Software (IFISS, \cite{ifiss}). All computations are done in MATLAB 9.1.0 (R2016b) on a MacBook with 1.6 GHz Intel Core i5 and 4 GB SDRAM.

\Cref{table:n} shows the performance of the multigrid solver for various mesh sizes $h$, or spatial degrees of freedom $N_x$, with other parameters fixed. The 3rd and 5th columns show that multigrid with low-rank truncation uses less time than the standard multigrid solver. This is especially true when $N_x$ is large: for $h=2^{-8}$, $N_x=261121$, low-rank approximation reduces the computing time from 2857s to 370s.  The improvement is much more significant (see the 4th and 6th columns) if the problem does not require very high accuracy for the solution. \Cref{table:m} shows the results for various degrees of freedom $N_\xi$ in the stochastic space. The multigrid solver with absolute truncation tolerance $10^{-6}$ is more efficient compared with no truncation in all cases and uses only about half the time. The 4th and 6th columns indicate that the decrease in computing time by low-rank truncation is more obvious with the larger tolerance $10^{-4}$.

\begin{table}[tbhp]
\caption{Performance of multigrid solver with $\epsilon_\text{abs}=10^{-6}$, $10^{-4}$, and no truncation for various $N_x=(2/h-1)^2$. Exponential covariance, $\sigma=0.01$, $b=4$, $m=11$, $p=3$, $N_\xi=364$.}
\label{table:n}
\centering
\begin{tabular}{|l|l|c|c|c|c|}
\hline
\multicolumn{2}{|c|}{} & $\epsilon_\text{abs}=10^{-6}$ & $\epsilon_\text{abs}=10^{-4}$ & \multicolumn{2}{c|}{No truncation}\\
\hline
\multirow{4}{2.3cm}{$64\times 64$ grid  $h=2^{-5}$  $N_x=3969$} 
& Rank & 51 & 12 & & \\
 & Iterations & 5 & 4 & 5 & 4 \\
 & Elapsed time & 6.26	&	1.63	&	12.60	&	10.08\\
& Rel residual & 1.51e-6	&	6.05e-5	&	9.97e-7	&	1.38e-5\\
\hline
\multirow{4}{2.3cm}{$128\times 128$ grid  $h=2^{-6}$  $N_x=16129$} 
& Rank & 51 & 12 & & \\
 & Iterations & 6 & 4 & 5 &3 \\
 & Elapsed time & 20.90	&	5.17	&	54.59	&	32.92\\
& Rel residual & 2.45e-6	&	9.85e-5	&	1.23e-6	&	2.20e-4\\
\hline
\multirow{4}{2.3cm}{$256\times 256$ grid  $h=2^{-7}$  $N_x=65025$} 
 & Rank & 49 & 13 & & \\
 & Iterations & 5 & 4 & 5 & 3 \\
 & Elapsed time & 76.56	&	24.31	&	311.27	&	188.70\\
& Rel residual & 4.47e-6	&	2.07e-4	&	1.36e-6	&	2.35e-04\\
\hline
\multirow{4}{2.3cm}{$512\times 512$ grid  $h=2^{-8}$  $N_x=261121$ } 
& Rank & 39 & 16 & & \\
 & Iterations & 5 & 3 & 4 & 3 \\
& Elapsed time & 370.98	&	86.30	&	2857.82	&	2099.06\\
& Rel residual & 9.93e-6	&	4.33e-4	&	1.85e-5	&	2.43e-4\\
\hline
\end{tabular}
\end{table}

\begin{table}[tbhp]
\caption{Performance of multigrid solver with $\epsilon_\text{abs}=10^{-6}$, $10^{-4}$, and no truncation for various $N_\xi=(m+p)!/(m!p!)$. Exponential covariance, $\sigma=0.01$, $h=2^{-6}$, $p=3$, $N_x=16129$.}
\label{table:m}
\centering
\begin{tabular}{|l|l|c|c|c|c|}
\hline
\multicolumn{2}{|c|}{} & $\epsilon_\text{abs}=10^{-6}$ & $\epsilon_\text{abs}=10^{-4}$ & \multicolumn{2}{c|}{No truncation}\\
\hline
& Rank & 25 & 9 & & \\
$b=5,m=8$ & Iterations & 5 & 4 & 5 &3 \\
$N_\xi=165$ & Elapsed time & 5.82	&	1.71	&	19.33	&	11.65\\
& Rel residual & 5.06e-6	&	3.41e-4	&	1.22e-6	&	2.20e-4\\
\hline
 & Rank & 51 & 12 & & \\
$b=4,m=11$ & Iterations & 6 & 4 & 5 & 3 \\
$N_\xi=364$ & Elapsed time & 20.90	&	5.17	&	54.59	&	32.92\\
& Rel residual & 2.45e-6	&	9.85e-5	&	1.23e-6	&	2.20e-4\\
\hline
& Rank & 91 &23 & & \\
$b=3,m=16$ & Iterations & 6 & 5 & 5 & 4 \\
$N_\xi=969$ & Elapsed time & {97.34}	&	16.96	&	197.82	&	158.56\\
& Rel residual & 5.71e-7	&	3.99e-5	&	1.23e-6	&	1.63e-5\\
\hline
& Rank & 165 & 86 & & \\
$b=2.5,m=22$ & Iterations & 6 & 5 & 6 & 4 \\
$N_\xi=2300$ & Elapsed time &{648.59}	&	172.41	&	1033.29	&	682.45\\
& Rel residual & 1.59e-7	&	8.57e-6	&	9.29e-8	&	1.63e-5\\
\hline
\end{tabular}
\end{table}

We have observed that when the standard deviation $\sigma$ in the covariance function (\ref{eq:cov}) is smaller, the singular values of the solution matrix $U$ decay faster (see \cref{fig:decay1}), and it is more suitable for low-rank approximation. This is also shown in the numerical results. In the previous cases, we fixed $\sigma$ as 0.01. In \cref{table:sigma}, the advantage of low-rank multigrid is clearer for a smaller $\sigma$, and the solution is well approximated by a matrix of smaller rank. On the other hand, as the value of $\sigma$ increases, the singular values of the matricized solution, as well as the matricized iterates, decay more slowly and the same truncation criterion gives higher-rank objects. Thus, the total time for solving the system and the time spent on truncation will also increase. Another observation from the above numerical experiments is that the iteration counts are largely unaffected by truncation. In \cref{alg:low-rank}, similar numbers of iterations are required to reach a comparable accuracy as in the cases with no truncation.

\begin{table}[tbph] 
\caption{Performance of multigrid solver with $\epsilon_\text{abs}=10^{-6}$, $10^{-4}$, and no truncation for various $\sigma$. Time spent on truncation is given in parentheses. Exponential covariance, $b=4$, $h=2^{-6}$, $m=11$, $p=3$, $N_x=16129$, $N_\xi=364.$}
\label{table:sigma}
\centering
\begin{tabular}{|l|l|c|c|c|c|}
\hline
\multicolumn{2}{|c|}{} & $\epsilon_\text{abs}=10^{-6}$ & $\epsilon_\text{abs}=10^{-4}$ & \multicolumn{2}{c|}{No truncation}\\
\hline
\multirow{4}{2cm}{$\sigma=0.001$} & Rank & 13 & 12 & & \\
& Iterations & 6 & 4 & 5 & 4 \\
& Elapsed time & 7.61 (4.77)	&	3.73 (2.29)	&	54.43	&	43.58\\
& Rel residual & 1.09e-6	&	6.53e-5	&	1.22e-6	&	1.63e-5\\
\hline
\multirow{4}{2cm}{$\sigma=0.01$} & Rank & 51 & 12 & & \\
& Iterations & 6 & 4 & 5 &3 \\
& Elapsed time & 20.90 (15.05)	&	5.17 (3.16)	&	54.59	&	32.92\\
& Rel residual & 2.45e-6	&	9.85e-5	&	1.23e-6	&	2.20e-4\\
\hline
\multirow{4}{2cm}{$\sigma=0.1$} & Rank & 136 & 54 & & \\
& Iterations & 6 & 4 & 5 &3 \\
& Elapsed time & {54.44 (33.91)}	&	{18.12 (12.70)}	&	55.49	&	33.62\\
& Rel residual & 3.28e-6	&	2.47e-4	&	1.88e-6	&	2.62e-4\\
\hline
\multirow{4}{2cm}{$\sigma=0.3$} & Rank & 234 & 128 & & \\
& Iterations & 9 & 7 & 8 &4 \\
& Elapsed time & {138.63 (77.54)}	&	{60.96 (38.66)}	&	86.77	&	43.42\\
& Rel residual & 6.03e-6	&	4.71e-4	&	2.99e-6	&	7.76e-4\\
\hline
\end{tabular}
\end{table}

\subsection{Squared exponential covariance}
\label{sec:numerical2}
In the second example we consider covariance function
\begin{equation} \label{eq:cov2}
r(x,y)=\sigma^2\text{exp}\left(-\frac{1}{b^2} \lVert x-y\rVert_2^2\right).
\end{equation}
The eigenpair $(\lambda_l,c_l(x))$ is computed via a Galerkin approximation of the eigenvalue problem
\begin{equation} 
\int_D r(x,y) c_l(y)\text{d}y=\lambda_l c_l(x).
\end{equation}
Again, in the KL expansion \cref{eq:ex_kl}, take $c_0(x)=1$ and $\{\xi_l\}_{l=1}^m$ independent and uniformly  distributed on $[-1,1]$. The eigenvalues of the squared exponential covariance \cref{eq:cov2} decay much faster than those of \cref{eq:cov}, and thus fewer terms are required to satisfy \cref{eq:lambda}. For instance, for $b=2$, $m=3$ will suffice. \Cref{table:n2} shows the performance of multigrid with low-rank truncation for various spatial degrees of freedom $N_x$. In this case, we are able to work with finer meshes since the value of $N_\xi$ is smaller. In all experiments the low-rank multigrid solver uses less time compared with no truncation.

\begin{table}[tbhp]
\caption{Performance of multigrid solver with $\epsilon_\text{abs}=10^{-6}$, $10^{-4}$, and no truncation for various $N_x=(2/h-1)^2$. Squared exponential covariance, $\sigma=0.01$, $b=2$, $m=3$, $p=3$, $N_\xi=20.$}
\label{table:n2}
\centering
\begin{tabular}{|l|l|c|c|c|c|}
\hline
\multicolumn{2}{|c|}{} & $\epsilon_\text{abs}=10^{-6}$ & $\epsilon_\text{abs}=10^{-4}$ & \multicolumn{2}{c|}{No truncation}\\
\hline
\multirow{4}{2.5cm}{$128\times 128$ grid  $h=2^{-6}$  $N_x=16129$} 
& Rank & 9 & 4 & & \\
& Iterations & 5 & 3 & 4 &3 \\
& Elapsed time & 0.78	&	0.35	&	1.08	&	0.82\\
& Rel residual & 1.20e-5	&	9.15e-4	&	1.63e-5	&	2.20e-4\\
\hline
\multirow{4}{2.5cm}{$256\times 256$ grid  $h=2^{-7}$  $N_x=65025$} 
& Rank & 8 & 4 & & \\
& Iterations & 4 & 3 & 4 & 3 \\
& Elapsed time & 2.55	&	1.31	&	4.58	&	3.46\\
& Rel residual & 3.99e-5	&	9.09e-4	&	1.78e-05	& 2.35e-4\\
\hline
\multirow{4}{2.5cm}{$512\times 512$ grid  $h=2^{-8}$  $N_x=261121$ } 
 & Rank & 8 & 2 & & \\
& Iterations & 4 & 2 & 4 & 2 \\
& Elapsed time & 10.23	&	2.13	&	18.93	&	9.61\\
& Rel residual & 6.41e-5	&	6.91e-3	&	1.85e-5	&	3.29e-3\\
\hline
\multirow{4}{2.5cm}{$1024\times 1024$ grid \hspace{0.5cm}      $h=2^{-9}$  $N_x=1045629$ } 
& Rank & 8 & 2 & & \\
& Iterations & 4 & 2 & 4 & 2 \\
& Elapsed time & 58.09	&	10.66	&	115.75	&	63.54\\
& Rel residual & 6.41e-5	&	6.93e-3	&	1.90e-5	&	3.32e-3\\
\hline
\end{tabular}
\end{table}

\section{Conclusions}
In this work we focused on the multigrid solver, one of the most efficient iterative solvers, for the stochastic steady-state diffusion problem. We discussed how to combine the idea of low-rank approximation with multigrid to reduce computational costs. We proved the convergence of the low-rank multigrid method with an analytic error bound. It was shown in numerical experiments that the low-rank truncation is useful in decreasing the computing time when the variance of the random coefficient is relatively small. The proposed algorithm also exhibited great advantage for problems with large number of spatial degrees of freedom.

\bibliographystyle{siamplain}
\bibliography{references}

\end{document}